\newtheoremstyle{theoremstyle}{\baselineskip}{}{\itshape}{}{\bfseries}{.}{5pt}{\thmnumber{#2\ \,}\thmname{#1}\thmnote{ \mdseries #3}}
\theoremstyle{theoremstyle}
\newtheorem{theorem}{Theorem}[section]
\newtheorem{lemma}[theorem]{Lemma}
\newtheorem{corollary}[theorem]{Corollary}
\newtheoremstyle{examplestyle}{\baselineskip}{}{\normalfont}{}{\bfseries}{.}{5pt}{\thmnumber{#2\ \,}\thmname{\mdseries\itshape #1}}
\theoremstyle{examplestyle}
\newtheorem{remark}[theorem]{Remark}
\numberwithin{equation}{theorem}
\newcommand{\id}{\operatorname{id}}
\newcommand{\sgn}{\operatorname{sgn}}
\newcommand{\N}{\mathbf N}
\newcommand{\Z}{\mathbf Z}
\title{A recursion formula for the irreducible characters of the symmetric group}
\author{Randall R. Holmes}
\subjclass[2010]{20C30, 20C15, 20C40}
\keywords{symmetric group, character, recursion}
\address{Randall R. Holmes,
Department of Mathematics and Statistics,
Auburn University,
Auburn AL,
36849,
USA,
\texttt{holmerr@auburn.edu}}
\begin{document}
\setcounter{section}{-1}
\begin{abstract}
The branching theorem expresses irreducible character values for the symmetric group \(S_n\) in terms of those for \(S_{n-1}\), but it gives the values only at elements of \(S_n\) having a fixed point.  We extend the theorem by providing a recursion formula that handles the remaining cases.  It expresses these character values in terms of values for \(S_{n-1}\) together with values for \(S_n\) that are already known in the recursive process.  This provides an alternative to the Murnaghan-Nakayama formula.
\end{abstract}

\maketitle
\markright{RECURSION FORMULA FOR IRREDUCIBLE CHARACTERS}

\section{Introduction}
\label{sec:Introduction}

Let \(n\) be a nonnegative integer and denote by \(S_n\) the symmetric group of degree \(n\).  For a partition \(\alpha\) of \(n\) (written \(\alpha\vdash n\)), denote by \(\zeta^\alpha\) the corresponding irreducible character of \(S_n\), and denote by \(\zeta^\alpha_\beta\) the value of \(\zeta^\alpha\) at the conjugacy class of \(S_n\) corresponding to \(\beta\vdash n\).

Assume that \(n>0\).  Let \(\alpha,\beta\vdash n\) and let \(\beta_m\) be the last (nonzero) part of \(\beta\).  If \(\beta_m=1\), then the conjugacy class of \(S_n\) corresponding to \(\beta\) contains a permutation that fixes \(n\), so the branching theorem \cite[2.4.3, p.~59]{MR644144} expresses \(\zeta^\alpha_\beta\) in terms of irreducible character values for \(S_{n-1}\).  The main result of this paper (Theorem \ref{thm:RecursionFormula}) is the following recursion formula for \(\zeta^\alpha_\beta\) in the remaining case \(\beta_m\not=1\):
\[
\label{eqn:RecursionFormulaSpecialCase}
\tag{1}
\zeta^\alpha_\beta=(\beta_m-1)^{-1}\left[\sum_{\substack{i=1\\\alpha_i>\alpha_{i+1}\\\,}}^{l(\alpha)}(\alpha_i-i)\zeta^{\alpha-\varepsilon_i}_{\beta-\varepsilon_m}-\sum_{\substack{j=1\\\beta_j<\beta_{j-1}}}^{m-1}\mu_j\beta_j\zeta^\alpha_{\beta+\varepsilon_j-\varepsilon_m}
\right].
\]
Here, \(l(\alpha)\) is the length of \(\alpha\), \(\varepsilon_i\) is the sequence with \(1\) in the \(i\)th position and zeros elsewhere, \(\mu_j\) is the multiplicity of \(\beta_j\) in the partition \(\beta-\varepsilon_m\), and \(\beta_0:=\infty\).

For each \(i\) in the first sum, \(\alpha-\varepsilon_i\) is a partition of \(n-1\), so \(\zeta^{\alpha-\varepsilon_i}\) is an irreducible character of \(S_{n-1}\).  For each \(j\) in the second sum, \(\beta+\varepsilon_j-\varepsilon_m\) is less than \(\beta\) with respect to the reverse lexicographical ordering of partitions.  It follows that the character values can be found recursively using the formula.

The character tables for \(S_n\), \(2\le n\le10\), generated (with the aid of a computer) using the recursion formula (\ref*{eqn:RecursionFormulaSpecialCase}), together with the branching theorem, are in agreement with the character tables appearing in \cite[Appendix I]{MR644144}.

The Murnaghan-Nakayama formula \cite[2.4.7, p.~60]{MR644144} also expresses the irreducible character values \(\zeta^\alpha_\beta\) recursively.  It does so by expressing such a value for \(S_n\) in terms of character values for \(S_{n-r}\), where \(r\) is a (nonzero) part of the partition \(\beta\).    The computation involves the removal of the various ``rims'' of length \(r\) from the Young diagram of the partition \(\alpha\).  The recursion formula (\ref*{eqn:RecursionFormulaSpecialCase}) may allow for proofs and computations in situations where the Murnaghan-Nakayama formula cannot be easily applied.

\section{General notation and background}
\label{sec:GeneralNotationAndBackground}

Let \(n\in\N:=\{0,1,2,\dots\}\).  Denote by \(S_n\) the symmetric group on the set \(\mathbf n:=\{1,2,\dots,n\}\) and by \(\varepsilon\) the identity element of \(S_n\).

For \(l\in\N\), put
\[
\Gamma_l=\{\gamma=(\gamma_1,\gamma_2,\dots,\gamma_l,0,0,\dots\,)\mid \gamma_i\in\Z\}
\]
and \(\Gamma^+_l=\{\gamma\in\Gamma_l\mid\gamma_i\ge0\ \forall\,i\}\).  Further, put \(\Gamma=\bigcup_l\Gamma_l\) and \(\Gamma^+=\bigcup_l\Gamma^+_l\).

For \(\gamma\in\Gamma\), denote by \(l(\gamma)\) (length of \(\gamma\)) the least \(l\in\N\) for which \(\gamma\in\Gamma_l\).  If \(\gamma\in\Gamma\) is nonzero, then \(l(\gamma)\) is the least \(l\in\Z^+\) for which \(\gamma_l\not=0\).

An element \(\gamma\) of \(\Gamma^+\) is a \emph{partition} of \(n\), written \(\gamma\vdash n\), if \(|\gamma|:=\sum_i\gamma_i=n\) and \(\gamma_i\ge\gamma_{i+1}\) for each \(i\).

The \emph{cycle structure} of a permutation \(\sigma\in S_n\) is the partition of \(n\) obtained by writing the lengths of the cycles in a disjoint cycle decomposition of \(\sigma\) in nonincreasing order (followed by zeros).  Two elements of \(S_n\) are conjugate if and only if they have the same cycle structure \cite[p.~9]{MR644144}.

Let \(\beta\in\Gamma^+\) with \(|\beta|=n\). For each \(j\in\Z^+\), put \(\sigma^\beta_j=(s_{j1},s_{j2},\dots,s_{j\beta_j})\in S_n\), where \(s_{ji}=i+\sum_{k=1}^{j-1}\beta_k\). (If \(\beta_j=0\), then \(\sigma^\beta_j=(\,)=\varepsilon\).)  Put \(\sigma_\beta=\prod_j\sigma^\beta_j\). For instance, if \(\beta=(4,2,2,1,0,0,\dots\,)\), then
\[
\sigma_\beta=(1,2,3,4)(5,6)(7,8)(9)\in S_9.
\]
It follows from the preceding paragraph that \(\{\sigma_\beta\mid\beta\vdash n\}\) is a complete set of representatives of the conjugacy classes of \(S_n\).

Let \(\alpha\in\Gamma\) with \(|\alpha|=n\).  Denote by \(T^\alpha\) the set of all sequences \(t=(t_1,t_2,\dots\,)\) with the \(t_i\) pairwise disjoint subsets of \(\mathbf n\) such that \(|t_i|=\alpha_i\) for each \(i\) (and note that this final condition implies that \(T^\alpha=\emptyset\) if \(\alpha\notin\Gamma^+\)).  An element \(t\) of \(T^\alpha\) is called an \emph{\(\alpha\)-tabloid}; it can be regarded as an \(\alpha\)-tableau with unordered rows (cf. \cite[p.~41]{MR644144}).

Assume that \(\alpha\in\Gamma^+\). Put \(t^\alpha=(\dot\sigma^\alpha_1,\dot\sigma^\alpha_2,\dots\,)\in T^\alpha\), where we use the notation \(\dot\sigma:=\{s_1,s_2,\dots,s_m\}\) for a cycle \(\sigma=(s_1,s_2,\dots,s_m)\) in \(S_n\).  For instance, if \(\alpha=(3,4,2,0,0,\dots)\), then
\[
t^\alpha=(\{1,2,3\},\{4,5,6,7\},\{8,9\},\emptyset,\emptyset,\dots\,)\in T^\alpha.
\]

An action of the symmetric group \(S_n\) on the set \(T^\alpha\) is given by \(\sigma t=(\sigma(t_1),\sigma(t_2),\dots\,)\) (\(\sigma\in S_n\), \(t\in T^\alpha\)). The \emph{Young subgroup} \(S_\alpha\) of \(S_n\) corresponding to \(\alpha\) is given by
\[
S_\alpha=\{\sigma\in S_n\mid \sigma t^\alpha=t^\alpha\}.
\]

\section{Reciprocity for permutation characters}
\label{sec:ReciprocityForPermutationCharacters}

For \(\alpha\in\Gamma\), define \(\xi^\alpha:S_{|\alpha|}\to\Z\) by
\[
\xi^\alpha=
\begin{cases}
1_{S_\alpha}\uparrow S_{|\alpha|},&\text{if \(\alpha\in\Gamma^+\)},\\
0,&\text{otherwise}.
\end{cases}
\]
Here \(1_{S_\alpha}\uparrow S_{|\alpha|}\) denotes the character of \(S_{|\alpha|}\) induced from the trivial character of the Young subgroup \(S_\alpha\) of \(S_{|\alpha|}\); it is a permutation character.  (See \cite{MR0450380} for general character theory.)

For \(\alpha\in\Gamma\) and \(\beta\in\Gamma^+\), put
\[
\xi^\alpha_\beta=
\begin{cases}
  \xi^\alpha(\sigma_\beta),&\text{if \(|\alpha|=|\beta|\)},\\
  0,&\text{otherwise}.
\end{cases}
\]

The goal in this section is to establish Theorem \ref{thm:ReciprocalPermutation} below, which is a reciprocal relationship involving these values \(\xi^\alpha_\beta\).

For \(\alpha\in\Gamma\) and \(\sigma\in S_{|\alpha|}\), put
\[
T^\alpha_\sigma=\{t\in T^\alpha\mid \sigma t=t\}.
\]
The following result was observed in \cite[p.~41]{MR644144}.  We provide a proof for the convenience of the reader.

\begin{lemma}
\label{lemma:PermutationCharacterEqualsCardinality}
  Let \(\alpha\in\Gamma\) and put \(n=|\alpha|\).  For every \(\sigma\in S_n\), we have \(\xi^\alpha(\sigma)=|T^\alpha_\sigma|\).
\end{lemma}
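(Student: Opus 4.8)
The plan is to split on whether \(\alpha\in\Gamma^+\). If \(\alpha\notin\Gamma^+\), then \(\xi^\alpha=0\) by definition, and the observation in Section \ref{sec:GeneralNotationAndBackground} that \(T^\alpha=\emptyset\) in this case gives \(T^\alpha_\sigma\subseteq T^\alpha=\emptyset\), so \(|T^\alpha_\sigma|=0\) as well; both sides vanish and there is nothing further to check. The substance of the lemma is therefore the case \(\alpha\in\Gamma^+\), which I would handle by recognizing \(\xi^\alpha\) as the permutation character of the action of \(S_n\) on the tabloid set \(T^\alpha\).

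To make that identification, I would first verify that \(S_n\) acts transitively on \(T^\alpha\). Given \(t,t'\in T^\alpha\), the constraints \(|t_i|=\alpha_i=|t'_i|\) for all \(i\), together with the fact that the (pairwise disjoint) blocks of each tabloid partition \(\mathbf n\) (since \(\sum_i\alpha_i=n\)), let me choose a bijection \(t_i\to t'_i\) for each \(i\) and assemble these into a single permutation \(\sigma\in S_n\) with \(\sigma t=t'\). With transitivity in hand, the orbit--stabilizer correspondence identifies \(T^\alpha\) with the coset space \(S_n/\operatorname{Stab}(t^\alpha)\) as \(S_n\)-sets, and by the very definition of the Young subgroup, \(\operatorname{Stab}(t^\alpha)=S_\alpha\). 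Hence the \(S_n\)-set \(T^\alpha\) is isomorphic to \(S_n/S_\alpha\).

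With this isomorphism established, I would invoke the standard fact that the character afforded by a transitive permutation action is the character induced from the trivial character of a point stabilizer, and that its value at \(\sigma\) counts the points fixed by \(\sigma\). Concretely, \(\xi^\alpha=1_{S_\alpha}\uparrow S_n\) is the permutation character of \(S_n\) on \(S_n/S_\alpha\cong T^\alpha\), so \(\xi^\alpha(\sigma)\) equals the number of tabloids fixed by \(\sigma\), which is exactly \(|T^\alpha_\sigma|\). Alternatively, one could avoid the language of \(S_n\)-set isomorphisms and compute \((1_{S_\alpha}\uparrow S_n)(\sigma)=|S_\alpha|^{-1}\,|\{g\in S_n\mid g^{-1}\sigma g\in S_\alpha\}|\) directly, then match the summation to fixed tabloids via \(gS_\alpha\mapsto g\,t^\alpha\). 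I anticipate no serious obstacle; the only points demanding care are verifying transitivity cleanly (that the disjoint blocks genuinely exhaust \(\mathbf n\), so that the blockwise bijections combine into one permutation) and confirming that the stabilizer is precisely \(S_\alpha\) rather than merely containing it, both of which are immediate from the definitions.
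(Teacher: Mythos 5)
Your proposal is correct and follows essentially the same route as the paper: both handle the degenerate case \(\alpha\notin\Gamma^+\) first, then identify \(\xi^\alpha\) as the permutation character of \(S_n\) acting on the coset space \(S_n/S_\alpha\), identify that \(S_n\)-set with \(T^\alpha\) (the paper via the map \(\tau S_\alpha\mapsto\tau t^\alpha\), you via transitivity plus orbit--stabilizer with \(\operatorname{Stab}(t^\alpha)=S_\alpha\)), and conclude by counting fixed points. The only difference is one of emphasis: you spell out the transitivity of the action on tabloids, which the paper asserts implicitly in writing \(S_nt^\alpha=T^\alpha\).
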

\begin{proof}
  Let \(\sigma\in S_n\).  If \(\alpha\notin\Gamma^+\), then \(T^\alpha\) (and hence \(T^\alpha_\sigma\)) is empty and the equality holds.  So, without loss of generality, we assume that \(\alpha\in\Gamma^+\).  The character \(\xi^\alpha\) is afforded by the permutation representation \(\rho\) of \(S_n\) corresponding to the \(S_n\)-set \(C=\{\tau S_\alpha\mid \tau\in S_n\}\) with action given by left multiplication, so the character value \(\xi^\alpha(\sigma)\), which is the trace of \(\rho(\sigma)\), equals the cardinality of the set \(\{c\in C\mid\sigma c=c\}\).  Now the \(S_n\)-set \(C\) is isomorphic to the \(S_n\)-set \(S_nt^\alpha=T^\alpha\) via \(\tau S_\alpha\mapsto\tau t^\alpha\), so \(\xi^\alpha(\sigma)=|\{t\in T^\alpha\mid\sigma t=t\}|=|T^\alpha_\sigma|.\)
\end{proof}

Let \(\alpha\in\Gamma^+\) and \(t\in T^\alpha\).  Let \(\sigma\in S_{|\alpha|}\) and write \(\sigma=\prod_{j=1}^m\sigma_j\) with the \(\sigma_j\) disjoint cycles.

\begin{lemma}
\label{lemma:FixesTabloidIff}
The following are equivalent:
\begin{itemize}
  \item[(i)] \(\sigma t=t\);
  \item[(ii)] for each \(1\le j\le m\), we have \(\dot\sigma_j\subseteq t_i\) for some \(i\).
\end{itemize}
\end{lemma}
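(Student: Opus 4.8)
The plan is to reduce both implications to tracking single points through the cycle decomposition, after two preliminary observations. First, since the blocks $t_1,t_2,\dots$ are pairwise disjoint subsets of $\mathbf n$ with $\sum_i|t_i|=\sum_i\alpha_i=|\alpha|=n$, they partition $\mathbf n$; in particular every element of $\mathbf n$ lies in exactly one $t_i$. Second, by the definition of the action $\sigma t=(\sigma(t_1),\sigma(t_2),\dots\,)$, the condition $\sigma t=t$ in (i) is equivalent to the statement that $\sigma(t_i)=t_i$ for every $i$. The one structural fact I will lean on repeatedly is that, because the $\sigma_j$ are \emph{disjoint} cycles, $\sigma$ agrees with $\sigma_j$ on each support $\dot\sigma_j$ and fixes every point belonging to no $\dot\sigma_j$.

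For the implication (i)$\Rightarrow$(ii), I would fix $j$ and a point $s\in\dot\sigma_j$, and let $t_i$ be the unique block containing $s$. The successive images $s,\sigma(s),\sigma^2(s),\dots$ exhaust $\dot\sigma_j$, and by disjointness each $\sigma^k(s)$ equals $\sigma_j^k(s)$. Since $\sigma(t_i)=t_i$ by (i), an easy induction shows that each of these images again lies in $t_i$, whence $\dot\sigma_j\subseteq t_i$. As $j$ was arbitrary, (ii) follows.

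For the converse (ii)$\Rightarrow$(i), I would fix $i$ and show $\sigma(t_i)\subseteq t_i$, treating $x\in t_i$ by cases: if $x$ lies in no support then $\sigma(x)=x\in t_i$; otherwise $x\in\dot\sigma_j$ for a (unique) $j$, and (ii) gives $\dot\sigma_j\subseteq t_{i'}$ for some $i'$, so $x\in t_i\cap t_{i'}$ forces $i'=i$ by disjointness, giving $\sigma(x)=\sigma_j(x)\in\dot\sigma_j\subseteq t_i$. Thus $\sigma(t_i)\subseteq t_i$, and since $\sigma$ is injective and $t_i$ is finite this inclusion upgrades to the equality $\sigma(t_i)=t_i$; as $i$ was arbitrary, $\sigma t=t$.

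The argument is essentially bookkeeping, so I expect no serious obstacle; the only two points demanding a little care are the use of disjointness to identify $\sigma$ with a single cycle on that cycle's support (so that the orbit of a point stays inside one block), and the finiteness argument needed to promote the inclusion $\sigma(t_i)\subseteq t_i$ to an equality in the second half.
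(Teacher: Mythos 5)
Your proof is correct and follows essentially the same route as the paper's: both directions come down to using the disjointness of the cycles to identify the action of \(\sigma\) on a support \(\dot\sigma_j\) with that of the single cycle \(\sigma_j\), and then tracking the orbit of a point inside the unique block containing it. The only cosmetic differences are that the paper argues factor-by-factor (first asserting that each \(\sigma_j\) fixes \(t\)) where you work with \(\sigma\) directly, and in (ii)\(\Rightarrow\)(i) the paper obtains \(\sigma_j(t_i)=t_i\) as a direct block-level equality while you prove the inclusion \(\sigma(t_i)\subseteq t_i\) pointwise and upgrade it to equality via injectivity and finiteness.
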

\begin{proof}
  Assume that (i) holds. Let \(1\le j\le m\).  Due to the disjointness of the cycles, we have \(\sigma_j t= t\).  Let \(k\in\dot\sigma_j\).  We have \(k\in t_i\) for some \(i\), and for every integer \(l\) we have \({\sigma_j}^l(k)\in{\sigma_j}^l(t_i)=t_i\), so \(\dot\sigma_j\subseteq t_i\).  Hence (ii) holds.

  Now assume that (ii) holds.  Let \(1\le j\le m\).  We have \(\dot\sigma_j\subseteq t_i\) for some \(i\).  Then \(\sigma_j (t_i)=t_i\) and for \(k\not=i\), \(\dot\sigma_j\cap t_k=\emptyset\) so \(\sigma_j(t_k)=t_k\) as well.  Therefore, \(\sigma_j t=t\).  It follows that (i) holds.
\end{proof}

The set \(\Gamma\) is a group under componentwise addition.  For \(i\in\Z^+\), put \(\varepsilon_i=(0,\dots,0,\underset i1,0,\dots\,)\in\Gamma\).

\begin{theorem}
\label{thm:ReciprocalPermutation}
  Let \(l,m\in\N\).  For every \(\alpha\in\Gamma_l\) and \(\beta\in\Gamma^+_m\), we have
  \[
  \sum_{i=1}^l(\alpha_i-1)\xi^{\alpha-\varepsilon_i}_\beta=\sum_{j=1}^m\beta_j\xi^\alpha_{\beta+\varepsilon_j}.
  \]
\end{theorem}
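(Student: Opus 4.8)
The plan is to prove the identity by interpreting every value $\xi^\gamma_\delta$ as a count of fixed tabloids and then exhibiting an explicit bijection between two sets of ``marked tabloids'' whose cardinalities are the two sides. First I would dispose of the trivial case: since $\xi^{\alpha-\varepsilon_i}_\beta\ne0$ forces $|\alpha|-1=|\beta|$ and $\xi^\alpha_{\beta+\varepsilon_j}\ne0$ forces $|\alpha|=|\beta|+1$, both sides vanish unless $|\alpha|=|\beta|+1$. So I assume $n:=|\alpha|=|\beta|+1$ and set $\sigma:=\sigma_\beta\in S_{n-1}$, a permutation of $\{1,\dots,n-1\}$ whose cycles are the $\sigma^\beta_j$ with $\beta_j>0$ (these partition $\{1,\dots,n-1\}$). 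By Lemma \ref{lemma:PermutationCharacterEqualsCardinality}, $\xi^{\alpha-\varepsilon_i}_\beta=|T^{\alpha-\varepsilon_i}_\sigma|$, and by Lemma \ref{lemma:FixesTabloidIff} the elements of $T^{\alpha-\varepsilon_i}_\sigma$ are exactly the shape-$(\alpha-\varepsilon_i)$ tabloids each of whose rows is a union of cycles of $\sigma$. For the right side, let $\sigma^{(j)}$ be the permutation of $\{1,\dots,n\}$ obtained from $\sigma$ by inserting the new point $n$ into the cycle $\sigma^\beta_j$; it has the same cycle type as $\sigma_{\beta+\varepsilon_j}$, so since $\xi^\alpha$ is a class function we get $\xi^\alpha_{\beta+\varepsilon_j}=\xi^\alpha(\sigma^{(j)})=|T^\alpha_{\sigma^{(j)}}|$.

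Next I would rewrite each side as the cardinality of a set of marked tabloids. Discarding the vanishing terms with $\alpha_i=0$ (for which $\alpha-\varepsilon_i\notin\Gamma^+$), the coefficient $\alpha_i-1$ in the $i$th term on the left equals the number of entries in row $i$ of any $t\in T^{\alpha-\varepsilon_i}$; hence the left side equals $|\mathcal A|$, where $\mathcal A$ is the set of triples $(i,t,x)$ with $t\in T^{\alpha-\varepsilon_i}_\sigma$ and $x$ an entry of row $i$ of $t$. Likewise the $j$th term on the right equals the number of pairs consisting of a tabloid $t'\in T^\alpha_{\sigma^{(j)}}$ and one of the $\beta_j$ entries $y$ of the cycle $\sigma^\beta_j$; so the right side equals $|\mathcal B|$, where $\mathcal B$ is the set of triples $(j,t',y)$ with $t'\in T^\alpha_{\sigma^{(j)}}$ and $y\in\dot\sigma^\beta_j$.

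Finally I would produce the bijection $\mathcal A\to\mathcal B$ by ``adjoining the point $n$''. Given $(i,t,x)\in\mathcal A$, let $j$ be the index with $x\in\dot\sigma^\beta_j$, form $t'$ from $t$ by placing $n$ into row $i$, and set $y:=x$; since $t$ is fixed by $\sigma$ we have $\dot\sigma^\beta_j\subseteq t_i$, so $\dot\sigma^\beta_j\cup\{n\}\subseteq t'_i$ and $t'$ is fixed by $\sigma^{(j)}$, giving $(j,t',y)\in\mathcal B$. The inverse sends $(j,t',y)$ to $(i,t,x)$, where $i$ is the row of $t'$ containing $n$, $t$ is obtained by deleting $n$ from that row, and $x:=y$; here the condition $\sigma^{(j)}t'=t'$ guarantees that $\dot\sigma^\beta_j\cup\{n\}$ lies in row $i$, so $x=y$ is indeed an entry of row $i$ of $t$. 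These maps are mutually inverse, whence $|\mathcal A|=|\mathcal B|$ and the theorem follows. The main obstacle I anticipate is purely organizational: keeping the three pieces of data aligned across the two sides --- matching the marked short-row entry $x$ on the left with the marked cycle-entry $y$ on the right, and checking that ``which row is shortened'' on the left corresponds exactly to ``which row receives $n$'' on the right --- together with the routine verification that the degenerate terms ($\alpha_i\in\{0,1\}$ or $\beta_j=0$) contribute zero on both sides and so remain consistent with the bijection.
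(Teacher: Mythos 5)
Your proof is correct and is essentially the paper's own argument: both reduce to the case \(|\alpha|=|\beta|+1\), convert every term into a count of fixed tabloids via Lemma \ref{lemma:PermutationCharacterEqualsCardinality}, describe those fixed tabloids through Lemma \ref{lemma:FixesTabloidIff}, and pass between the two sides by the very same operation of appending \(n\) to the \(j\)th cycle of \(\sigma_\beta\) (your \(\sigma^{(j)}\) is the paper's \(\sigma[j]\)) and to row \(i\) of the tabloid, finishing with the same class-function observation that \(\xi^\alpha(\sigma^{(j)})=\xi^\alpha_{\beta+\varepsilon_j}\). The only difference is bookkeeping: where the paper slices \(T^{\alpha-\varepsilon_i}_\sigma\) by the row content \(X\) and uses the identity \(\alpha_i-1=\sum_{j\in J(X)}\beta_j\) (its Step 1) together with a family of bijections indexed by \((i,j,X)\) (its Step 2), you absorb the coefficients \(\alpha_i-1\) and \(\beta_j\) into a marked entry and package the whole double count as one explicit bijection of triples \((i,t,x)\leftrightarrow(j,t',y)\).
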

\begin{proof}
  Before fixing elements, we define in general
  \[
    T^\alpha_\sigma(i,X):=\{t\in T^\alpha_\sigma\mid t_i=X\}
  \]
  for \(\alpha\in\Gamma\), \(\sigma\in S_{|\alpha|}\), \(i\in\Z^+\), and a set \(X\).

  Let \(\alpha\in\Gamma_l\) and \(\beta\in\Gamma^+_m\) and put \(n=|\alpha|\).  If either \(\alpha\notin\Gamma^+\) or \(|\beta|\not=n-1\), then \(\xi^{\alpha-\varepsilon_i}_\beta,\xi^\alpha_{\beta+\varepsilon_j}=0\) for every \(i\) and \(j\), and the equality holds.  Also, for \(i>l(\alpha)\), we have \(\alpha-\varepsilon_i\notin\Gamma^+\) so \(\xi^{\alpha-\varepsilon_i}=0\) and the corresponding term on the left is zero, and for \(j>l(\beta)\), we have \(\beta_j=0\) and the corresponding term on the right is zero.   So, without loss of generality, we assume that \(\alpha\in\Gamma^+\), \(|\beta|=n-1\), \(l=l(\alpha)\), and \(m=l(\beta)\).

  For each \(j\in\Z^+\), put \(\sigma_j=\sigma^\beta_j\), and put \(\sigma=\sigma_\beta\) (see Section \ref{sec:GeneralNotationAndBackground}).  We have \(\sigma=\prod_j\sigma_j\), a product of disjoint cycles with each \(\sigma_j\) of length \(\beta_j\).

  Let \(X\) be a set and put
  \[
  J(X)=\{j\in\Z^+\mid\dot\sigma_j\subseteq X\}.
  \]
  Fix \(i\in\Z^+\).

  Step 1:\quad\emph{If \(T^{\alpha-\varepsilon_i}_\sigma(i,X)\) is nonempty, then \(\alpha_i-1=\sum_{j\in J(X)}\beta_j\).}

  Assume that \(T^{\alpha-\varepsilon_i}_\sigma(i,X)\) is nonempty.  Then there exists \(t\in T^{\alpha-\varepsilon_i}\) such that \(\sigma t=t\) and \(t_i=X\).
  We claim that \(X=\bigcup_{j\in J(X)}\dot\sigma_j\).  Let \(k\in X=t_i\).  We have \(1\le k\le n-1\), so \(k\in\dot\sigma_j\cap t_i\) for some \(1\le j\le m\).  Then Lemma \ref{lemma:FixesTabloidIff} implies \(\dot\sigma_j\subseteq t_i=X\), so that \(j\in J(X)\).  This gives one inclusion of the claim, and the other inclusion is immediate.  Therefore,
  \[
  \alpha_i-1=|X|=\sum_{j\in J(X)}|\dot\sigma_j|=\sum_{j\in J(X)}\beta_j
  \]
  and Step 1 is complete.

  Fix \(j\in\Z^+\) and denote by \(\sigma[j]\) the permutation in \(S_n\) obtained from \(\sigma\) by appending \(n\) to the \(j\)th cycle \(\sigma_j\).  More precisely, if we write \(\sigma_j=(s_{j1},s_{j2},\dots,s_{j\beta_j})\), then \(\sigma[j]={\sigma_j}'\prod_{k\not=j}\sigma_k\), where \({\sigma_j}'=(s_{j1},s_{j2},\dots,s_{j\beta_j},n)\).

  Step 2:\quad \emph{For each \(X\subseteq\mathbf n':=\{1,2,\dots,n-1\}\), we have
  \[
  |T^\alpha_{\sigma[j]}(i,X\cup\{n\})|=
  \begin{cases}
    |T^{\alpha-\varepsilon_i}_\sigma(i,X)|,&\text{if \(j\in J(X)\),}\\
    0,&\text{otherwise}.
  \end{cases}
  \]
  }

  Let \(X\subseteq\mathbf n'\). If \(\alpha-\varepsilon_i\notin\Gamma^+\), then \(\alpha_i=0\), so both \(T^\alpha_{\sigma[j]}(i,X\cup\{n\})\) and \(T^{\alpha-\varepsilon_i}_\sigma(i,X)\) are empty and the statement holds.  So, without loss of generality, we assume that \(\alpha-\varepsilon_i\in\Gamma^+\).

  Assume that \(j\in J(X)\), so that \(\dot\sigma_j\subseteq X\). For an \((\alpha-\varepsilon_i)\)-tabloid \(t\), denote by \(t^+\) the \(\alpha\)-tabloid with \((t^+)_i=t_i\cup\{n\}\) and \((t^+)_k=t_k\), \(k\not=i\).  It follows from Lemma \ref{lemma:FixesTabloidIff} that \(t\mapsto t^+\) defines a bijection \(T^{\alpha-\varepsilon_i}_\sigma(i,X)\to T^\alpha_{\sigma[j]}(i,X\cup\{n\})\) with inverse given by \(t\mapsto t^-\) where \((t^-)_i=t_i\setminus\{n\}\) and \((t^-)_k=t_k\), \(k\not=i\).  Therefore, the first case follows.

  Now assume that \(j\notin J(X)\), so that \(\dot\sigma_j\nsubseteq X\). We have \(s_{jk}\notin X\) for some \(1\le k\le\beta_j\). Let \(t\) be an \(\alpha\)-tabloid with \(t_i=X\cup\{n\}\).  Since \(s_{jk}\not=n\), we have \(s_{jk}\notin t_i\).  Therefore, \(n\in\dot{\sigma_j}'\cap t_i\), but \(s_{jk}\in\dot{\sigma_j}'\setminus t_i\), so Lemma \ref{lemma:FixesTabloidIff} implies that \(\sigma[j] t\not=t\).  We conclude that \(T^\alpha_{\sigma[j]}(i,X\cup\{n\})\) is empty and the second case follows.  This completes Step 2.

  We are now ready to establish the equality in the theorem. Using Lemma \ref{lemma:PermutationCharacterEqualsCardinality} (and recalling that \(\sigma=\sigma_\beta\)) we have
  \begin{align*}
    \sum_{i=1}^l(\alpha_i-1)\xi^{\alpha-\varepsilon_i}_\beta&=\sum_{i=1}^l(\alpha_i-1)|T^{\alpha-\varepsilon_i}_\sigma|\\
    &=\sum_{i=1}^l\sum_{X\subseteq\mathbf n'}(\alpha_i-1)|T^{\alpha-\varepsilon_i}_\sigma(i,X)|.
  \end{align*}

  For each \(1\le i\le l\) and \(X\subseteq\mathbf n'\) we have, using Step 1 and then Step 2,
  \[
  (\alpha_i-1)|T^{\alpha-\varepsilon_i}_\sigma(i,X)|
  =\sum_{j\in J(X)}\beta_j|T^{\alpha-\varepsilon_i}_\sigma(i,X)|
  =\sum_{j=1}^m\beta_j|T^\alpha_{\sigma[j]}(i,X\cup\{n\})|.
  \]
  Therefore,
  \begin{align*}
    \sum_{i=1}^l(\alpha_i-1)\xi^{\alpha-\varepsilon_i}_\beta&=\sum_{i=1}^l\sum_{X\subseteq\mathbf n'}\sum_{j=1}^m\beta_j|T^\alpha_{\sigma[j]}(i,X\cup\{n\})|\\
    &=\sum_{j=1}^m\beta_j\sum_{i=1}^l\sum_{\substack{X\subseteq\mathbf n\\n\in X}}|T^\alpha_{\sigma[j]}(i,X)|\\
    &=\sum_{j=1}^m\beta_j|T^\alpha_{\sigma[j]}|.
  \end{align*}
  Using Lemma \ref{lemma:PermutationCharacterEqualsCardinality} and the fact that \(\xi^\alpha\) is a class function we get \[
  |T^\alpha_{\sigma[j]}|=\xi^\alpha(\sigma[j])=\xi^\alpha(\sigma_{\beta+\varepsilon_j})=\xi^\alpha_{\beta+\varepsilon_j}
  \]
  for every \(1\le j\le m\).  This completes the proof.
\end{proof}

\section{An adjoint pair}
\label{sec:AnAdjointPair}

In this section, we reformulate the reciprocity relationship of the preceding section, expressing it as an adjoint relationship between a pair of \(\Z\)-linear maps.

Denote by \(A\) the free \(\Z\)-module on the set \(\{x^\alpha\mid\alpha\in\Gamma\}\) and denote by \(B\) the free \(\Z\)-module on the set \(\{x_\beta\mid\beta\in\Gamma^+\}\).

Denote by \((\,\cdot\,,\,\cdot\,):A\times B\to\Z\) the \(\Z\)-bilinear map uniquely determined by
\[
(x^\alpha,x_\beta)=\xi^\alpha_\beta\quad(\alpha\in\Gamma, \beta\in\Gamma^+).
\]

Let \(l,m\in\N\).  Denote by \(\delta^-_l:A\to A\) and \(\delta^+_m:B\to B\) the \(\Z\)-linear maps uniquely determined by
\[
\delta^-_l(x^\alpha)=\sum_{i=1}^l(\alpha_i-1)x^{\alpha-\varepsilon_i}\quad(\alpha\in\Gamma)
\]
and
\[
\delta^+_m(x_\beta)=\sum_{j=1}^m\beta_jx_{\beta+\varepsilon_j}\quad(\beta\in\Gamma^+),
\]
respectively.

Put \(A_l=\langle x^\alpha\mid\alpha\in\Gamma_l\rangle\le A\) and \(B_m=\langle x_\beta\mid\beta\in\Gamma^+_m\rangle\le B\).

\begin{theorem}
\label{thm:AdjointPair}
  For every \(a\in A_l\) and \(b\in B_m\), we have
  \[
  (\delta^-_l(a),b)=(a,\delta^+_m(b)).
  \]
\end{theorem}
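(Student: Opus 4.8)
The plan is to reduce the statement directly to Theorem \ref{thm:ReciprocalPermutation} by exploiting $\Z$-bilinearity. Since the maps $\delta^-_l$ and $\delta^+_m$ are $\Z$-linear and the pairing $(\,\cdot\,,\,\cdot\,)$ is $\Z$-bilinear, both sides of the desired equation are $\Z$-bilinear functions of the pair $(a,b)$. Hence it suffices to verify the identity on the free generators, that is, to take $a=x^\alpha$ with $\alpha\in\Gamma_l$ and $b=x_\beta$ with $\beta\in\Gamma^+_m$.

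First I would expand the left-hand side. Using the definition of $\delta^-_l$, then bilinearity, then the definition of the pairing,
\[
(\delta^-_l(x^\alpha),x_\beta)=\Bigl(\sum_{i=1}^l(\alpha_i-1)x^{\alpha-\varepsilon_i},\,x_\beta\Bigr)=\sum_{i=1}^l(\alpha_i-1)\,\xi^{\alpha-\varepsilon_i}_\beta.
\]
Each term here is well defined because $\alpha-\varepsilon_i\in\Gamma$ and $\beta\in\Gamma^+$. Symmetrically, for the right-hand side,
\[
(x^\alpha,\delta^+_m(x_\beta))=\Bigl(x^\alpha,\,\sum_{j=1}^m\beta_jx_{\beta+\varepsilon_j}\Bigr)=\sum_{j=1}^m\beta_j\,\xi^\alpha_{\beta+\varepsilon_j},
\]
where each $\beta+\varepsilon_j$ again lies in $\Gamma^+$, so every value of the pairing is defined. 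The two resulting expressions are then equal precisely by Theorem \ref{thm:ReciprocalPermutation}, applied to the fixed $\alpha\in\Gamma_l$ and $\beta\in\Gamma^+_m$, which completes the verification on generators and hence in general.

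I do not anticipate a genuine obstacle: all of the combinatorial content has already been absorbed into Theorem \ref{thm:ReciprocalPermutation}, and the present statement is essentially a change of language recasting that reciprocity as an adjunction between $\delta^-_l$ and $\delta^+_m$ with respect to the pairing. The only points requiring care are bookkeeping ones. I would check that the passage to generators is legitimate (which is exactly where bilinearity of $(\,\cdot\,,\,\cdot\,)$ and linearity of the two maps are used) and that the summation ranges $1\le i\le l$ and $1\le j\le m$ built into the definitions of $\delta^-_l$ and $\delta^+_m$ coincide with the ranges in the hypothesis of Theorem \ref{thm:ReciprocalPermutation}. This is precisely why the restriction to $A_l$ and $B_m$, equivalently the membership conditions $\alpha\in\Gamma_l$ and $\beta\in\Gamma^+_m$, is the correct domain for the adjunction.
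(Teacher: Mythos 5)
Your proposal is correct and follows essentially the same route as the paper's own proof: reduce to the generators $x^\alpha$, $x_\beta$ by bilinearity, expand both sides via the definitions of $\delta^-_l$, $\delta^+_m$, and the pairing, and invoke Theorem \ref{thm:ReciprocalPermutation}. The only difference is cosmetic — you expand the two sides separately and compare, while the paper writes it as a single chain of equalities.
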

\begin{proof}
  Let \(a\in A_l\) and \(b\in B_m\).  We assume, without loss of generality (due to linearity), that \(a=x^\alpha\) and \(b=x_\beta\) with \(\alpha\in\Gamma_l\) and \(\beta\in\Gamma^+_m\).   Using Theorem \ref{thm:ReciprocalPermutation}, we have
  \begin{align*}
  (\delta^-_l(a),b)&=(\sum_{i=1}^l(\alpha_i-1)x^{\alpha-\varepsilon_i},x_\beta)
  =\sum_{i=1}^l(\alpha_i-1)\xi^{\alpha-\varepsilon_i}_\beta\\
  &=\sum_{j=1}^m\beta_j\xi^\alpha_{\beta+\varepsilon_j}
  =(x^\alpha,\sum_{j=1}^m\beta_jx_{\beta+\varepsilon_j})\\
  &=(a,\delta^+_m(b)).
  \end{align*}
\end{proof}

\section{Reciprocity for irreducible characters}
\label{sec:ReciprocityForIrreducibleCharacters}

The goal of this section is Theorem \ref{thm:ReciprocalIrreducible}, which provides a reciprocal relationship for the irreducible character values for \(S_n\) (\(n\in\N\)) analogous to the relationship given in Theorem \ref{thm:ReciprocalPermutation}.

Let \(n\in\N\).  For \(\alpha\vdash n\), denote by \(\zeta^\alpha\) the irreducible character of \(S_n\) corresponding to \(\alpha\) \cite[2.2.5, p.~39]{MR644144}, and for \(\beta\in\Gamma^+\) with \(|\beta|=n\), put \(\zeta^\alpha_\beta=\zeta^\alpha(\sigma_\beta)\).  The matrix \([\zeta^\alpha_\beta]_{\alpha,\beta}\) with \(\alpha,\beta\vdash n\) (relative to a choice of ordering of the partitions) is the character table of \(S_n\).

We write \(\sgn\) for the sign character of \(S_n\), so \(\sgn(\sigma)\) is \(1\) or \(-1\) according as the permutation \(\sigma\) is even or odd.

For \(l\in\N\),  put \(\id_l=(1,2,3,\dots,l,0,0,\dots\,)\in\Gamma\).

For \(\alpha\in\Gamma\), we have \(\alpha\in\Gamma_l\) for some \(l\in\N\); put
\[
\chi^\alpha=\sum_{\sigma\in S_l}\sgn(\sigma)\xi^{\alpha+\sigma-\id_l}.
\]
(As observed in \cite[p.~47]{MR644144}, this definition is independent of the choice of \(l\).) In the sum, the permutation \(\sigma\in S_l\) is regarded as the element \((\sigma(1),\sigma(2),\dots,\sigma(l),0,0,\dots\,)\) of the group \(\Gamma\).

\begin{theorem}[{\cite[2.3.15, p.~52]{MR644144}}]
\label{thm:ChiEqualsIrreducible}
For every \(\alpha\vdash n\), we have \(\zeta^\alpha=\chi^\alpha.\)
\qed
\end{theorem}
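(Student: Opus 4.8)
The plan is to recognize the defining formula for $\chi^\alpha$ as the character-theoretic form of the Jacobi--Trudi determinant and to prove the theorem through the Frobenius characteristic map. Recall that $\operatorname{ch}$ is an isometric ring isomorphism from $\bigoplus_n R(S_n)$, the ring of virtual characters of all the $S_n$ equipped with the induction product, onto the ring of symmetric functions, and that it carries the permutation character $\xi^\mu$ to the complete homogeneous symmetric function $h_\mu$ (since induction corresponds to multiplication and $\operatorname{ch}(1_{S_k})=h_k$, the conventions matching $\xi^\gamma=0\leftrightarrow h_\gamma=0$ for $\gamma\notin\Gamma^+$). Applying $\operatorname{ch}$ to the definition gives
\[
\operatorname{ch}(\chi^\alpha)=\sum_{\sigma\in S_l}\sgn(\sigma)\,h_{\alpha+\sigma-\id_l}=\det\bigl(h_{\alpha_i-i+j}\bigr)_{1\le i,j\le l},
\]
the right-hand side being the cofactor expansion of the determinant. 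Since $\operatorname{ch}$ is injective, the theorem is equivalent to the pair of identities $\operatorname{ch}(\chi^\alpha)=s_\alpha$ and $\operatorname{ch}(\zeta^\alpha)=s_\alpha$, where $s_\alpha$ is the Schur function.

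The first of these is the Jacobi--Trudi identity $s_\alpha=\det(h_{\alpha_i-i+j})$. The second, $\operatorname{ch}(\zeta^\alpha)=s_\alpha$, I would obtain from Young's rule: decomposing the module $1_{S_\mu}\uparrow S_n$ into Specht modules shows that the multiplicity of $\zeta^\lambda$ in $\xi^\mu$ is the Kostka number $K_{\lambda\mu}$, so that $\xi^\mu=\sum_\lambda K_{\lambda\mu}\zeta^\lambda$. Crucially $K_{\lambda\mu}=0$ unless $\lambda\trianglerighteq\mu$ and $K_{\mu\mu}=1$, so $(K_{\lambda\mu})$ is unitriangular in the dominance order. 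Matching this against the classical expansion $h_\mu=\sum_\lambda K_{\lambda\mu}s_\lambda$ (the same Kostka matrix) and using $\operatorname{ch}(\xi^\mu)=h_\mu$, one inverts the common unitriangular system to conclude $\operatorname{ch}(\zeta^\lambda)=s_\lambda$ for every $\lambda$. Combining the two identities yields $\operatorname{ch}(\chi^\alpha)=s_\alpha=\operatorname{ch}(\zeta^\alpha)$, hence $\chi^\alpha=\zeta^\alpha$.

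It remains to prove the Jacobi--Trudi identity, which is the combinatorial heart of the argument. I would prove it by the Lindström--Gessel--Viennot method: expand $\det(h_{\alpha_i-i+j})$ as a signed sum over $l$-tuples of weakly increasing lattice paths, where the $i$th path runs from a source determined by $i$ to a sink determined by $\alpha_i-i+\sigma(i)$, so that the permutation $\sigma$ records how sinks are matched to sources. A sign-reversing involution that swaps the tails of the first intersecting pair of paths cancels every term indexed by a tuple with a crossing; the surviving non-intersecting tuples occur only for $\sigma=\id_l$ and are in weight-preserving bijection with the semistandard tableaux of shape $\alpha$, whose generating function is $s_\alpha$.

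The step I expect to be the main obstacle is the Jacobi--Trudi identity, together with establishing Young's rule with its dominance triangularity; these are the only genuinely substantive inputs, the rest being formal properties of $\operatorname{ch}$. As a self-contained alternative that avoids lattice-path combinatorics, one can instead show directly that the $\chi^\alpha$ are orthonormal---computing $\langle\xi^\mu,\xi^\nu\rangle$ by Mackey's formula as the number of nonnegative integer matrices with row sums $\mu$ and column sums $\nu$---so that each $\chi^\alpha$ is $\pm$ an irreducible character, and then fix the sign and the labelling by noting that the unitriangular expansion forces $\zeta^\alpha$ to appear in $\chi^\alpha$ with coefficient $+1$.
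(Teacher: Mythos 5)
The paper itself gives no proof of this theorem: it is quoted directly from James--Kerber \cite[2.3.15, p.~52]{MR644144}, with the tombstone in the statement signalling that the proof is omitted. So your argument can only be measured against the standard literature, and there it holds up; it is the symmetric-function proof of the determinantal form. Transporting the definition of \(\chi^\alpha\) through the Frobenius characteristic map does give \(\operatorname{ch}(\chi^\alpha)=\det\bigl(h_{\alpha_i-i+j}\bigr)_{1\le i,j\le l}\) (the conventions \(\xi^\gamma=0\) for \(\gamma\notin\Gamma^+\) and \(h_k=0\) for \(k<0\) match up exactly as you say), the Jacobi--Trudi identity identifies this determinant with the Schur function \(s_\alpha\), and inverting the unitriangular Kostka system coming from Young's rule gives \(\operatorname{ch}(\zeta^\alpha)=s_\alpha\); injectivity of \(\operatorname{ch}\) then finishes the proof. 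By contrast, the proof in James--Kerber (the one the paper is implicitly resting on) is essentially what you describe as your ``self-contained alternative'': compute \(\langle\chi^\alpha,\chi^\beta\rangle=\delta_{\alpha\beta}\) from Mackey's formula by counting nonnegative integer matrices with prescribed row and column sums, conclude that each \(\chi^\alpha\) is plus or minus an irreducible character, and pin down the sign and the labelling by triangularity. Your main route buys brevity and conceptual transparency at the price of importing the machinery of \(\operatorname{ch}\), Jacobi--Trudi, and Young's rule; the James--Kerber route stays entirely inside the character theory of \(S_n\), which is the setting the paper works in.

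One caution about your main route: it is non-circular only if Young's rule \(\xi^\mu=\sum_\lambda K_{\lambda\mu}\zeta^\lambda\) is taken with a module-theoretic proof (via the semistandard basis theorem for homomorphisms from Specht modules into permutation modules, as in James's lecture notes or Sagan). In expositions built on the characteristic map, such as Macdonald's or Stanley's, Young's rule is \emph{deduced} from \(\operatorname{ch}(\zeta^\lambda)=s_\lambda\), which is precisely what you are trying to prove, so quoting it from such a source would be circular. Your phrasing (``decomposing the module \(1_{S_\mu}\uparrow S_n\) into Specht modules'') points to the module-theoretic version, and with that reading, and granting your Lindstr\"om--Gessel--Viennot sketch of Jacobi--Trudi (which is the standard proof), the argument is sound.
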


For \(l\in\N\), denote by \(D_l:A\to A\) the \(\Z\)-linear map uniquely determined by
\[
D_l(x^\alpha)=\sum_{\sigma\in S_l}\sgn(\sigma) x^{\alpha+\sigma-1_l}\quad(\alpha\in\Gamma),
\]
where \(1_l=(1,1,\dots,\underset l1,0,0,\dots\,)\in\Gamma\).

For \(\alpha\in\Gamma\) and \(\beta\in\Gamma^+\) with \(|\alpha|=|\beta|\), put \(\chi^\alpha_\beta=\chi^\alpha(\sigma_\beta)\).

\begin{lemma}
\label{lemma:ChiUsingBilinearMap}
Let \(l\in\N\), \(\alpha\in\Gamma_l\), and \(\beta\in\Gamma^+\), with \(|\alpha|=|\beta|\). We have
\[
\chi^\alpha_\beta=(D_l(x^{\alpha-\id_l+1_l}),x_\beta).
\]
\end{lemma}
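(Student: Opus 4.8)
The plan is to observe that both sides of the claimed identity reduce, after unwinding the relevant definitions, to the single signed sum $\sum_{\sigma\in S_l}\sgn(\sigma)\,\xi^{\alpha+\sigma-\id_l}_\beta$; the content lies entirely in checking that the definitions line up, there being essentially no computation. Since $\alpha\in\Gamma_l$, I may use this particular $l$ to compute $\chi^\alpha$, the value $\chi^\alpha_\beta=\chi^\alpha(\sigma_\beta)$ being independent of the choice.

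First I would handle the right-hand side. Applying the definition of $D_l$ to the basis element $x^{\alpha-\id_l+1_l}$ gives
\[
D_l(x^{\alpha-\id_l+1_l})=\sum_{\sigma\in S_l}\sgn(\sigma)\,x^{(\alpha-\id_l+1_l)+\sigma-1_l}=\sum_{\sigma\in S_l}\sgn(\sigma)\,x^{\alpha+\sigma-\id_l},
\]
since the two copies of $1_l$ cancel in the exponent. Pairing against $x_\beta$ and using the $\Z$-bilinearity of $(\,\cdot\,,\,\cdot\,)$ together with $(x^\gamma,x_\beta)=\xi^\gamma_\beta$, I obtain
\[
(D_l(x^{\alpha-\id_l+1_l}),x_\beta)=\sum_{\sigma\in S_l}\sgn(\sigma)\,\xi^{\alpha+\sigma-\id_l}_\beta.
\]

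For the left-hand side, the definitions give $\chi^\alpha_\beta=\chi^\alpha(\sigma_\beta)=\sum_{\sigma\in S_l}\sgn(\sigma)\,\xi^{\alpha+\sigma-\id_l}(\sigma_\beta)$, so it remains only to identify each character value $\xi^{\alpha+\sigma-\id_l}(\sigma_\beta)$ with the bracket symbol $\xi^{\alpha+\sigma-\id_l}_\beta$ appearing in the definition of the pairing. This is the one step needing attention, and it rests on a degree count: regarding $\sigma\in S_l$ as an element of $\Gamma$, we have $|\sigma|=\sum_{i=1}^l\sigma(i)=1+2+\cdots+l=|\id_l|$, whence $|\alpha+\sigma-\id_l|=|\alpha|+|\sigma|-|\id_l|=|\alpha|=|\beta|$. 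Because the degrees agree, the definition of $\xi^\gamma_\beta$ yields $\xi^{\alpha+\sigma-\id_l}(\sigma_\beta)=\xi^{\alpha+\sigma-\id_l}_\beta$ for every $\sigma\in S_l$, and the two signed sums coincide.

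I expect this degree bookkeeping to be the only genuine obstacle, since it is precisely what licenses moving between the character value $\xi^{\alpha+\sigma-\id_l}(\sigma_\beta)$ and the symbol $\xi^{\alpha+\sigma-\id_l}_\beta$ used in the bilinear form; the remainder is the cancellation of $1_l$ in the exponent and an appeal to bilinearity. No partition hypothesis on $\alpha+\sigma-\id_l$ is required: when $\alpha+\sigma-\id_l\notin\Gamma^+$, both $\xi^{\alpha+\sigma-\id_l}$ and the corresponding pairing value vanish by definition, so such terms contribute nothing to either side.
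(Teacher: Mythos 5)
Your proposal is correct and is essentially the paper's own proof run from both ends toward the middle: the paper's single chain of equalities unwinds $\chi^\alpha_\beta$ into $\sum_{\sigma\in S_l}\sgn(\sigma)\,\xi^{\alpha+\sigma-\id_l}_\beta$, applies bilinearity, and recognizes $D_l(x^{\alpha-\id_l+1_l})$, exactly as you do. Your explicit degree check $|\sigma|=|\id_l|$ and the remark about the $\Gamma^+$-free definition of $\xi^\gamma$ are just the (correct) justifications the paper leaves implicit.
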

\begin{proof}
  Using the definitions, we have
  \begin{align*}
    \chi^\alpha_\beta&=\sum_{\sigma\in S_l}\sgn(\sigma)\xi^{\alpha+\sigma-\id_l}_\beta
    =\sum_{\sigma\in S_l}\sgn(\sigma)(x^{\alpha+\sigma-\id_l},x_\beta)\\
    &=(\sum_{\sigma\in S_l}\sgn(\sigma)x^{\alpha+\sigma-\id_l},x_\beta)
    =(D_l(x^{\alpha-\id_l+1_l}),x_\beta).
  \end{align*}
\end{proof}

\begin{lemma}
\label{lemma:DeterminantCommutesWithDerivation}
  For each \(l\in\N\), we have \(D_l\delta^-_l=\delta^-_lD_l\).
\end{lemma}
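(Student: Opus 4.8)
The plan is to verify the identity on the free generators $x^\alpha$ and then match the two resulting double sums term by term, reducing the whole lemma to the vanishing of a single alternating sum that I will kill with a sign-reversing involution.

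First I would note that both $D_l$ and $\delta^-_l$ are $\Z$-linear, so it suffices to prove $D_l\delta^-_l(x^\alpha)=\delta^-_lD_l(x^\alpha)$ for each $\alpha\in\Gamma$. Expanding the left-hand side,
\[
D_l\delta^-_l(x^\alpha)=\sum_{i=1}^l\sum_{\sigma\in S_l}(\alpha_i-1)\sgn(\sigma)\,x^{\alpha+\sigma-1_l-\varepsilon_i},
\]
while expanding the right-hand side and using that the $i$th entry of $\alpha+\sigma-1_l$ equals $\alpha_i+\sigma(i)-1$ for $1\le i\le l$ gives
\[
\delta^-_lD_l(x^\alpha)=\sum_{\sigma\in S_l}\sum_{i=1}^l\sgn(\sigma)(\alpha_i+\sigma(i)-2)\,x^{\alpha+\sigma-1_l-\varepsilon_i}.
\]
The crucial observation is that both expansions run over the same index set $\{1,\dots,l\}\times S_l$ and attach to the pair $(i,\sigma)$ the \emph{same} monomial $x^{\alpha+\sigma-1_l-\varepsilon_i}$. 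Hence the difference $\delta^-_lD_l(x^\alpha)-D_l\delta^-_l(x^\alpha)$ equals
\[
E:=\sum_{\sigma\in S_l}\sum_{i=1}^l\sgn(\sigma)(\sigma(i)-1)\,x^{\alpha+\sigma-1_l-\varepsilon_i},
\]
and the lemma is reduced to showing $E=0$.

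To prove $E=0$ I would construct a sign-reversing involution on the pairs contributing nonzero terms. A term vanishes whenever $\sigma(i)=1$, so consider a pair $(\sigma,i)$ with $\sigma(i)\ge2$. Put $i'=\sigma^{-1}(\sigma(i)-1)$ and let $\sigma'=\sigma\,(i\,i')$ be the permutation obtained by exchanging the values of $\sigma$ at positions $i$ and $i'$; since $\sigma(i)\ge2$ we have $i'\ne i$, so this transposition is nontrivial and $\sgn(\sigma')=-\sgn(\sigma)$. I claim that $(\sigma,i)\mapsto(\sigma',i')$ is an involution that preserves the monomial but negates the coefficient $\sgn(\sigma)(\sigma(i)-1)$; pairing each term with its image then cancels the whole sum.

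The main work, and the step I expect to be the only real obstacle, is checking that the exponent vector is unchanged, i.e. that $\alpha+\sigma'-1_l-\varepsilon_{i'}=\alpha+\sigma-1_l-\varepsilon_i$. This is a componentwise verification in three cases: for $k\notin\{i,i'\}$ the value $\sigma(k)$ and the indicator $[k=i]$ are both untouched; at $k=i$ the drop of $\sigma(i)$ to $\sigma(i)-1$ is exactly compensated by the removal of the $-\varepsilon_i$ term; and at $k=i'$ the rise of $\sigma(i')=\sigma(i)-1$ to $\sigma(i)$ is compensated by the newly inserted $-\varepsilon_{i'}$. Once this is confirmed, the relation $\sigma'(i')=\sigma(i)$ shows both that the coefficient changes only by the sign $\sgn(\sigma')=-\sgn(\sigma)$ (the factor $\sigma'(i')-1=\sigma(i)-1$ being preserved) and that applying the construction to $(\sigma',i')$ returns $(\sigma,i)$, so the map is a fixed-point-free involution on the nonvanishing terms. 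Therefore $E=0$ and $D_l\delta^-_l=\delta^-_lD_l$.
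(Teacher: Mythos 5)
Your proof is correct and follows essentially the same route as the paper: both expand on the generators \(x^\alpha\), reduce the claim to the vanishing of the identical alternating sum \(\sum_{\sigma,i}\sgn(\sigma)(\sigma(i)-1)x^{\alpha+\sigma-1_l-\varepsilon_i}\), and cancel it by pairing each term with the one obtained by swapping the adjacent values \(\sigma(i)-1\) and \(\sigma(i)\). Your involution \((\sigma,i)\mapsto(\sigma\,(i\,i'),\,i')\) is precisely the paper's pairing \(\sigma\leftrightarrow(i-1,i)\sigma\) (which the paper sets up after reindexing \(i\mapsto\sigma^{-1}(i)\) and factoring out the coefficient), just organized as a sign-reversing involution on individual terms rather than as the vanishing of whole inner sums.
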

\begin{proof}
  Let \(l\in\N\) and \(\alpha\in\Gamma\).  On the one hand,
  \begin{align*}
    D_l\delta^-_l(x^\alpha)&=D_l\Big(\sum_{i=1}^l(\alpha_i-1)x^{\alpha-\varepsilon_i}\Big)\\
    &=\sum_{i=1}^l(\alpha_i-1)\sum_{\sigma\in S_l}\sgn(\sigma)x^{\alpha-\varepsilon_i+\sigma-1_l}.
  \end{align*}
  On the other hand,
  \begin{align*}
    \delta^-_lD_l(x^\alpha)&=\delta^-_l\big(\sum_{\sigma\in S_l}\sgn(\sigma)x^{\alpha+\sigma-1_l}\big)\\
    &=\sum_{\sigma\in S_l}\sgn(\sigma)\sum_{i=1}^l(\alpha_i+\sigma(i)-2)x^{\alpha+\sigma-1_l-\varepsilon_i}.
  \end{align*}
  Therefore,
  \begin{align*}
    (\delta^-_lD_l-D_l\delta^-_l)(x^\alpha)&=\sum_{\sigma\in S_l}\sgn(\sigma)\sum_{i=1}^l(\sigma(i)-1)x^{\alpha+\sigma-1_l-\varepsilon_i}\\
    &=\sum_{\sigma\in S_l}\sgn(\sigma)\sum_{i=1}^l(i-1)x^{\alpha+\sigma-1_l-\varepsilon_{\sigma^{-1}(i)}}\\
    &=\sum_{i=1}^l(i-1)\sum_{\sigma\in S_l}\sgn(\sigma)x^{\alpha+\sigma-1_l-\varepsilon_{\sigma^{-1}(i)}}.
  \end{align*}
Let \(1<i\le l\).  Put \(\tau=(i-1,i)\in S_l\), and for \(\sigma\in S_l\), put
\[
\gamma^\sigma=\alpha+\sigma-1_l-\varepsilon_{\sigma^{-1}(i)}.
\]
Let \(\sigma\in S_l\).  For \(1\le k\le l\), we have
\begin{align*}
\gamma^\sigma_k&=
\begin{cases}
  \alpha_k+i-2,&\text{if \(\sigma(k)\in\{i-1,i\}\)},\\
  \alpha_k+\sigma(k)-1,&\text{otherwise},
\end{cases}
\\
&=\gamma^{\tau\sigma}_k,
\end{align*}
and for \(k>l\), we have \(\gamma^\sigma_k=\alpha_k=\gamma^{\tau\sigma}_k\).  Therefore, \(\gamma^\sigma=\gamma^{\tau\sigma}\), giving
\[
\sum_{\sigma\in S_l}\sgn(\sigma)x^{\alpha+\sigma-1_l-\varepsilon_{\sigma^{-1}(i)}}=\sum_{\substack{\sigma\in S_l\\ \sigma\text{, even}}}(x^{\gamma^\sigma}-x^{\gamma^{\tau\sigma}})=0.
\]

We conclude that \(D_l\delta^-_l=\delta^-_lD_l\).
\end{proof}

\begin{lemma}[{\cite[2.3.9, p.~48]{MR644144}}]
\label{lemma:SwitchEntries}
For every \(\gamma\in\Gamma\) and \(i\in\Z^+\), we have \(\chi^\gamma=-\chi^\eta\), where
\[
\eta=(\gamma_1,\dots,\gamma_{i-1},\gamma_{i+1}-1,\gamma_i+1,\gamma_{i+2},\dots\,).
\]
\qed
\end{lemma}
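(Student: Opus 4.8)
The plan is to argue directly from the definition $\chi^\alpha=\sum_{\sigma\in S_l}\sgn(\sigma)\,\xi^{\alpha+\sigma-\id_l}$, using two facts. The first is that this sum does not depend on the choice of $l$ with $\alpha\in\Gamma_l$, as noted right after the definition. The second is that $\xi^\mu$ is unchanged when the entries of $\mu$ are permuted: for $\mu\in\Gamma^+$ the Young subgroup attached to a rearrangement of $\mu$ is conjugate in $S_{|\mu|}$ to $S_\mu$, so the induced characters agree (equivalently, by Lemma~\ref{lemma:PermutationCharacterEqualsCardinality}, the number of $\sigma$-fixed tabloids depends only on the multiset of block sizes), while for $\mu\notin\Gamma^+$ every rearrangement is again outside $\Gamma^+$, so both sides vanish.

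First I would invoke the $l$-independence to enlarge $l$, if necessary, so that $i+1\le l$; then $\gamma,\eta\in\Gamma_l$ and the transposition $\tau=(i,i+1)$ lies in $S_l$. Writing $c=\gamma-\id_l$, so that $c_k=\gamma_k-k$, the central observation is that the shifted sequence $\eta-\id_l$ is obtained from $c$ by interchanging its $i$th and $(i+1)$st entries. Indeed, $\eta_i-i=\gamma_{i+1}-1-i=c_{i+1}$ and $\eta_{i+1}-(i+1)=\gamma_i+1-(i+1)=c_i$, while the remaining entries are untouched. This is exactly why the shifts $\gamma_{i+1}-1$ and $\gamma_i+1$ in the definition of $\eta$ are the correct ones: they are precisely what converts the operation on $\gamma$ into a plain transposition of entries of the content vector $c$.

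Granting this, $\chi^\eta=\sum_{\sigma\in S_l}\sgn(\sigma)\,\xi^{\tau c+\sigma}$, where $\tau c+\sigma$ denotes the sequence whose $k$th entry is $c_{\tau(k)}+\sigma(k)$. Permuting its entries by $\tau$ leaves $\xi$ invariant and turns it into the sequence with $k$th entry $c_k+\sigma(\tau(k))$; that is, $\xi^{\tau c+\sigma}=\xi^{c+\sigma\tau}$. Reindexing by $\sigma\mapsto\sigma\tau$, which is a bijection of $S_l$ with $\sgn(\sigma\tau)=-\sgn(\sigma)$, then gives $\chi^\eta=-\sum_{\sigma\in S_l}\sgn(\sigma)\,\xi^{c+\sigma}=-\chi^\gamma$, as desired. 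Because the whole argument is a reindexing of one fixed sum, it is indifferent to whether particular terms vanish, so the cases $\gamma\notin\Gamma^+$ or $\eta\notin\Gamma^+$ require no separate treatment.

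The one point demanding care — and the only real obstacle — is keeping the two actions straight: the permutation $\sigma$ in the definition of $\chi$ contributes the summand $\sigma(k)$ in position $k$, whereas the invariance of $\xi^\mu$ is applied by rearranging the entries of $\mu$. I would verify explicitly that the composite of these two operations is what collapses to the single substitution $\sigma\mapsto\sigma\tau$; everything else reduces to the routine content computation recorded above.
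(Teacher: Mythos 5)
Your proof is correct, and the comparison here is one-sided: the paper does not prove this lemma at all — it is quoted from the reference (James--Kerber, 2.3.9) with the proof omitted — so what you have produced is a self-contained argument for a fact the paper imports. The details check out. Your content-vector computation is exactly right: with \(c=\gamma-\id_l\) and \(\tau=(i,i+1)\) one has \(\eta_i-i=\gamma_{i+1}-(i+1)=c_{i+1}\) and \(\eta_{i+1}-(i+1)=\gamma_i-i=c_i\), so \(\eta-\id_l\) is \(c\) with its \(i\)th and \((i+1)\)st entries transposed. The two auxiliary facts you invoke are both available in this setting: independence of the defining sum from the choice of \(l\) is noted in the paper immediately after the definition of \(\chi^\alpha\), and invariance of \(\xi^\mu\) under rearrangement of the entries of \(\mu\) follows either from conjugacy of the corresponding Young subgroups or from the tabloid count of Lemma \ref{lemma:PermutationCharacterEqualsCardinality}, with the degenerate case \(\mu\notin\Gamma^+\) handled exactly as you say (a negative entry survives any rearrangement). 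The reindexing \(\sigma\mapsto\sigma\tau\) then extracts the factor \(\sgn(\tau)=-1\) uniformly across the whole sum, so, as you note, no case analysis on vanishing terms is needed. Conceptually your argument is the antisymmetry of a determinant: \(\chi^\gamma\) is a formal determinant (alternating sum over \(S_l\)) in the shifted entries \(\gamma_k-k\), and passing from \(\gamma\) to \(\eta\) swaps two of them, hence flips the sign. It is worth observing that this pairing-off under composition with a transposition is the same mechanism the paper itself uses inside the proof of Lemma \ref{lemma:DeterminantCommutesWithDerivation}, so your proof has the additional virtue of making the paper self-contained using only tools it already deploys.
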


The formula in the following theorem is analogous to that in Theorem \ref{thm:ReciprocalPermutation}, but we point out (lest it be supposed a misprint) that the factor here is \(\alpha_i-i\) instead of \(\alpha_i-1\) as earlier.

\begin{theorem}
\label{thm:ReciprocalIrreducible}
  Let \(n\in\Z^+\).  For each \(\alpha\vdash n\) and each \(\beta\in\Gamma^+\) with \(|\beta|=n-1\), we have
  \[
  \sum_{\substack{i=1\\\alpha_i>\alpha_{i+1}}}^{l(\alpha)}(\alpha_i-i)\zeta^{\alpha-\varepsilon_i}_\beta=\sum_{j=1}^{l(\beta)}\beta_j\zeta^\alpha_{\beta+\varepsilon_j}.
  \]
\end{theorem}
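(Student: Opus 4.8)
The plan is to transport the permutation-character reciprocity of Theorem~\ref{thm:ReciprocalPermutation}, repackaged as the adjointness of Theorem~\ref{thm:AdjointPair}, through the operator $D_l$ that converts the permutation characters $\xi^\alpha$ into the virtual characters $\chi^\alpha$, and then to invoke $\chi^\alpha=\zeta^\alpha$ (Theorem~\ref{thm:ChiEqualsIrreducible}). Throughout I would fix $l=l(\alpha)$ and $m=l(\beta)$, so that $x_\beta\in B_m$ and every sequence appearing below lies in $\Gamma_l$.

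Starting from the right-hand side, since $\alpha\vdash n$ we have $\zeta^\alpha_{\beta+\varepsilon_j}=\chi^\alpha_{\beta+\varepsilon_j}$, and Lemma~\ref{lemma:ChiUsingBilinearMap} rewrites each such value as $(D_l(x^{\alpha-\id_l+1_l}),x_{\beta+\varepsilon_j})$. Weighting by $\beta_j$ and summing, the definition of $\delta^+_m$ turns the right-hand side into $(D_l(x^{\alpha-\id_l+1_l}),\delta^+_m(x_\beta))$. Now Theorem~\ref{thm:AdjointPair} moves $\delta^+_m$ across the pairing to $\delta^-_l$, and Lemma~\ref{lemma:DeterminantCommutesWithDerivation} commutes $\delta^-_l$ past $D_l$. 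Expanding $\delta^-_l(x^{\alpha-\id_l+1_l})$ and using $(\alpha-\id_l+1_l)_i-1=\alpha_i-i$ yields the coefficient $\alpha_i-i$; applying Lemma~\ref{lemma:ChiUsingBilinearMap} in reverse to each resulting term $x^{(\alpha-\varepsilon_i)-\id_l+1_l}$ then shows the right-hand side equals $\sum_{i=1}^l(\alpha_i-i)\chi^{\alpha-\varepsilon_i}_\beta$.

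It remains to match this full sum over $1\le i\le l$ with the restricted sum in the statement and to replace $\chi$ by $\zeta$. For an index with $\alpha_i=\alpha_{i+1}$, the sequence $\gamma:=\alpha-\varepsilon_i$ satisfies $\gamma_i=\gamma_{i+1}-1$, so feeding $\gamma$ into Lemma~\ref{lemma:SwitchEntries} at position $i$ returns $\eta=\gamma$; hence $\chi^\gamma=-\chi^\gamma$, and $\chi^{\alpha-\varepsilon_i}=0$. Thus precisely the terms with $\alpha_i=\alpha_{i+1}$ vanish, leaving the sum over those $i$ with $\alpha_i>\alpha_{i+1}$ (the index $i=l(\alpha)$ surviving because $\alpha_{l(\alpha)}>0=\alpha_{l(\alpha)+1}$). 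For each surviving index $\alpha-\varepsilon_i\vdash n-1$ is a genuine partition, so Theorem~\ref{thm:ChiEqualsIrreducible} gives $\chi^{\alpha-\varepsilon_i}_\beta=\zeta^{\alpha-\varepsilon_i}_\beta$, which is the claimed identity.

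The genuinely substantive step is the vanishing $\chi^{\alpha-\varepsilon_i}=0$ when $\alpha_i=\alpha_{i+1}$: this self-cancellation from Lemma~\ref{lemma:SwitchEntries} is what cuts the summation down to the advertised range. Everything else --- the passage through $D_l$, the adjointness, and the commutation $D_l\delta^-_l=\delta^-_lD_l$ --- is routine index bookkeeping, and in particular the coefficient $\alpha_i-i$ (rather than the $\alpha_i-1$ of Theorem~\ref{thm:ReciprocalPermutation}) emerges automatically from the $\id_l$-shift built into $D_l$, with no extra effort.
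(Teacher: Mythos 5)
Your proposal is correct and is essentially the paper's own proof traversed in the opposite direction: the paper starts from the left-hand side and pushes $\delta^-_l$ across the pairing via Lemma~\ref{lemma:ChiUsingBilinearMap}, Lemma~\ref{lemma:DeterminantCommutesWithDerivation}, Theorem~\ref{thm:AdjointPair}, and the vanishing $\chi^{\alpha-\varepsilon_i}=0$ from Lemma~\ref{lemma:SwitchEntries}, exactly as you do starting from the right-hand side. All the key ingredients, including the identification of the coefficient $\alpha_i-i$ from the $\id_l$-shift and the cancellation when $\alpha_i=\alpha_{i+1}$, match the paper's argument.
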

\begin{proof}
Let \(\alpha\vdash n\) and let \(\beta\in\Gamma^+\) with \(|\beta|=n-1\).

Fix \(1\le i\le l(\alpha)\).  We first observe that if \(\alpha_i>\alpha_{i+1}\), then \(\alpha-\varepsilon_i\vdash(n-1)\), so \(\zeta^{\alpha-\varepsilon_i}_\beta\) is defined and equals \(\chi^{\alpha-\varepsilon_i}_\beta\) by Theorem \ref{thm:ChiEqualsIrreducible}.  Now assume that \(\alpha_i\ngtr\alpha_{i+1}\). Then \(\alpha_i=\alpha_{i+1}\), implying \(\gamma_i=\gamma_{i+1}-1\), where \(\gamma=\alpha-\varepsilon_i\).  Then, in the notation of Lemma \ref{lemma:SwitchEntries}, we have \(\eta=\gamma\).  Therefore, by that lemma, \(\chi^\gamma=-\chi^\gamma\), implying \(\chi^{\alpha-\varepsilon_i}=\chi^\gamma=0\).

Put \(l=l(\alpha)\) and \(m=l(\beta)\), and denote by LHS the left-hand side of the equation in the statement.  Using the preceding paragraph for the first equality, and then Lemma \ref{lemma:ChiUsingBilinearMap} we have
\begin{align*}
  \text{LHS}
  &=\sum_{i=1}^l(\alpha_i-i)\chi^{\alpha-\varepsilon_i}_\beta
  =\sum_{i=1}^l(\alpha_i-i)(D_l\big(x^{\alpha-\varepsilon_i-\id_l+1_l}\big),x_\beta)\\
  &=(D_l\bigg(\sum_{i=1}^l(\alpha_i-i)x^{\alpha-\varepsilon_i-\id_l+1_l}\bigg),x_\beta)
  =(D_l\big(\delta^-_l(x^{\alpha-\id_l+1_l})\big),x_\beta)\\
  &=(\delta^-_l\big(D_l(x^{\alpha-\id_l+1_l})\big),x_\beta),
\end{align*}
where the last equality uses Lemma \ref{lemma:DeterminantCommutesWithDerivation}. Next, the argument of \(\delta^-_l\) is seen to be in \(A_l\), and \(x_\beta\in B_m\), so Theorem \ref{thm:AdjointPair} applies and we get
\begin{align*}
  \text{LHS}
  &=(D_l\big(x^{\alpha-\id_l+1_l}\big),\delta^+_m(x_\beta))
  =(D_l\big(x^{\alpha-\id_l+1_l}\big),\sum_{j=1}^m\beta_j(x_{\beta+\varepsilon_j}))\\
  &=\sum_{j=1}^m\beta_j(D_l\big(x^{\alpha-\id_l+1_l}\big),x_{\beta+\varepsilon_j})
  =\sum_{j=1}^m\beta_j\chi^\alpha_{\beta+\varepsilon_j}\\
  &=\sum_{j=1}^{l(\beta)}\beta_j\zeta^\alpha_{\beta+\varepsilon_j},
\end{align*}
again using Lemma \ref{lemma:ChiUsingBilinearMap} and then Theorem \ref{thm:ChiEqualsIrreducible}.
\end{proof}

\section{Recursion formula}
\label{sec:RecursionFormula}

The theorem below provides a method for recursively finding the irreducible characters values for \(S_n\) (\(n\in\Z^+\)).  Each value \(\zeta^\alpha_\beta\) is expressed in terms of values \(\zeta^\gamma_\delta\) with \(\gamma\vdash n-1\) or with \(\gamma=\alpha\) and \(\delta<\beta\), where \(<\) is the reverse lexicographical order on the set of partitions of \(n\) (i.e., \(\delta<\beta\) if for some \(k\in\Z^+\) we have \(\delta_j=\beta_j\) for \(j<k\) and \(\delta_k>\beta_k\)).

Therefore, if the character table for \(S_{n-1}\) is known, then the character table for \(S_n\) can be determined by taking the irreducible characters in turn (in any order) and working through the conjugacy classes of \(S_n\) ordered using the reverse lexicographic ordering of the associated partitions.

The first case in the theorem (\(\beta_m=1\)) is the case where the permutation at which the character is being evaluated has a fixed point, so the character value is given by the branching theorem \cite[2.4.3, p.~59]{MR644144}.  We have included the formula in this case for the sake of completeness.

The statement of the second case (\(\beta_m\not=1\)) requires additional notation:  For \(\gamma\in\Gamma\), put \(\mu(\gamma)=(\mu_1,\mu_2,\dots,\mu_m)\), where \(m=l(\gamma)\) and
\[
\mu_j=|\{1\le k\le m\mid\gamma_k=\gamma_j\}|\quad(1\le j\le m).
\]

The sole irreducible character value \(\zeta^0_0=1\) for \(S_0\) begins the recursion.

\begin{theorem}
\label{thm:RecursionFormula}
  Let \(n\in\Z^+\), let \(\alpha,\beta\vdash n\), and put \(m=l(\beta)\).
  \begin{itemize}
  \item[(i)] If \(\beta_m=1\), then
  \[
  \zeta^\alpha_\beta=\sum_{\substack{i=1\\\alpha_i>\alpha_{i+1}\\\,}}^{l(\alpha)}\zeta^{\alpha-\varepsilon_i}_{\beta-\varepsilon_m}\quad\text{\normalfont (the branching theorem)}.
  \]
  \item[(ii)] If \(\beta_m\not=1\), then
  \[
  \zeta^\alpha_\beta=(\beta_m-1)^{-1}\Bigg[\sum_{\substack{i=1\\\alpha_i>\alpha_{i+1}\\\,}}^{l(\alpha)}(\alpha_i-i)\zeta^{\alpha-\varepsilon_i}_{\beta-\varepsilon_m}-\sum_{\substack{j=1\\\beta_j<\beta_{j-1}}}^{m-1}\mu_j\beta_j\zeta^\alpha_{\beta+\varepsilon_j-\varepsilon_m}
  \Bigg],
  \]
  where \(\mu=\mu(\beta-\varepsilon_m)\) and \(\beta_0:=\infty\).
  \end{itemize}
\end{theorem}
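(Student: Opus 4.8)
The plan is to derive part (ii) from the reciprocity relation of Theorem \ref{thm:ReciprocalIrreducible} by a single substitution followed by isolating the desired term, and to dispose of part (i) by invoking the branching theorem directly. Part (i) cannot come from reciprocity, since its coefficients are \(1\) rather than \(\alpha_i-i\); the author includes it only for completeness, so I would simply cite it.

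For part (ii), assume \(\beta_m\ge2\) and set \(\gamma=\beta-\varepsilon_m\). Since \(\beta_m\ge2\), the sequence \(\gamma\) is again a partition with \(l(\gamma)=m\), \(\gamma_m=\beta_m-1\ge1\), and \(|\gamma|=n-1\). I would apply Theorem \ref{thm:ReciprocalIrreducible} to \(\alpha\vdash n\) and this \(\gamma\), obtaining
\[
\sum_{\substack{i=1\\\alpha_i>\alpha_{i+1}}}^{l(\alpha)}(\alpha_i-i)\zeta^{\alpha-\varepsilon_i}_{\beta-\varepsilon_m}=\sum_{j=1}^{m}\gamma_j\zeta^\alpha_{\gamma+\varepsilon_j}.
\]
The left-hand side already matches the first sum in (ii). On the right, the \(j=m\) term is \(\gamma_m\zeta^\alpha_{\gamma+\varepsilon_m}=(\beta_m-1)\zeta^\alpha_\beta\), because \(\gamma+\varepsilon_m=\beta\). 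Isolating this term and dividing by \(\beta_m-1\ne0\), and using \(\gamma_j=\beta_j\) and \(\gamma+\varepsilon_j=\beta+\varepsilon_j-\varepsilon_m\) for \(j<m\), gives
\[
\zeta^\alpha_\beta=(\beta_m-1)^{-1}\left[\sum_{\substack{i=1\\\alpha_i>\alpha_{i+1}}}^{l(\alpha)}(\alpha_i-i)\zeta^{\alpha-\varepsilon_i}_{\beta-\varepsilon_m}-\sum_{j=1}^{m-1}\beta_j\zeta^\alpha_{\beta+\varepsilon_j-\varepsilon_m}\right].
\]

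It then remains to identify the inner second sum with the \(\mu_j\)-weighted restricted sum of (ii). The key point is that \(\zeta^\alpha_\delta=\zeta^\alpha(\sigma_\delta)\) depends only on the conjugacy class of \(\sigma_\delta\), hence only on the multiset of nonzero parts of \(\delta\); in particular it is unchanged under permuting the entries of \(\delta\). For indices \(j,j'<m\) with \(\beta_j=\beta_{j'}\), the sequences \(\beta+\varepsilon_j-\varepsilon_m\) and \(\beta+\varepsilon_{j'}-\varepsilon_m\) are rearrangements of one another, so their character values coincide, and their coefficients \(\beta_j=\beta_{j'}\) agree. Since \(\beta\) is nonincreasing, indices of a common value form a consecutive run whose first index \(j\) is characterized by \(\beta_j<\beta_{j-1}\) (with \(\beta_0=\infty\) making \(j=1\) a run leader). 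I would group \(\sum_{j=1}^{m-1}\beta_j\zeta^\alpha_{\beta+\varepsilon_j-\varepsilon_m}\) by value: each run contributes its length (among \(1,\dots,m-1\)) times \(\beta_j\zeta^\alpha_{\beta+\varepsilon_j-\varepsilon_m}\). That length equals \(\mu_j\), the multiplicity of \((\beta-\varepsilon_m)_j=\beta_j\) in \(\beta-\varepsilon_m\); here one checks that the decremented part \(\beta_m-1\) is distinct from every \(\beta_k\) with \(k<m\) (as \(\beta_k\ge\beta_m>\beta_m-1\)), so this multiplicity counts exactly the indices \(k\le m-1\) with \(\beta_k=\beta_j\). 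Replacing each run by its leader and weight \(\mu_j\) converts the plain sum into the restricted sum of the theorem, completing the proof.

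The main obstacle is this last step: the bookkeeping that turns the plain sum over \(j=1,\dots,m-1\) into the restricted, multiplicity-weighted sum. It rests on two facts — the class-function invariance of \(\zeta^\alpha_\delta\) under rearranging \(\delta\), and the verification that \(\mu_j\) counts precisely the run length among the first \(m-1\) indices, which hinges on \(\beta_m-1\) being a value not equal to any earlier part. Everything else is formal manipulation of the reciprocity identity.
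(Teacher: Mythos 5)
Your proposal is correct and follows essentially the same route as the paper's proof: apply Theorem \ref{thm:ReciprocalIrreducible} to \(\alpha\) and \(\beta-\varepsilon_m\), split off the \(j=m\) term as \((\beta_m-1)\zeta^\alpha_\beta\), and collapse the remaining sum to the restricted \(\mu_j\)-weighted sum using class-function invariance together with the observation that \(\beta_m-1\) differs from every earlier part. The only difference is the trivial one of when you solve for \(\zeta^\alpha_\beta\) (before versus after the grouping step).
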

\begin{proof}
  (i) See \cite[pp.~58--59]{MR644144}.

  (ii) Assume that \(\beta_m\not=1\).  By Theorem \ref{thm:ReciprocalIrreducible}, we have
  \begin{equation}
  \label{eqn:RecursionFirst}
  \sum_{\substack{i=1\\\alpha_i>\alpha_{i+1}}}^{l(\alpha)}(\alpha_i-i)\zeta^{\alpha-\varepsilon_i}_{\beta-\varepsilon_m}
  =\sum_{j=1}^{m-1}\beta_j\zeta^\alpha_{\beta-\varepsilon_m+\varepsilon_j}+(\beta_m-1)\zeta^\alpha_\beta.
  \end{equation}
  If \(\beta_k=\beta_j\) for some \(1\le k,j\le m-1\), then the permutations \(\sigma_{\beta+\varepsilon_k-\varepsilon_m}\) and \(\sigma_{\beta+\varepsilon_j-\varepsilon_m}\) are conjugate, implying \(\zeta^\alpha_{\beta+\varepsilon_k-\varepsilon_m}=\zeta^\alpha_{\beta+\varepsilon_j-\varepsilon_m}\).  Therefore,
  \begin{equation}
  \label{eqn:RecursionSecond}
  \begin{aligned}
  \sum^{m-1}_{j=1}\beta_j\zeta^\alpha_{\beta-\varepsilon_m+\varepsilon_j}&=\sum_{\substack{j=1\\\beta_j<\beta_{j-1}}}^{m-1}\sum_{\substack{k=1\\\beta_k=\beta_j}}^{m-1}\beta_k\zeta^\alpha_{\beta+\varepsilon_k-\varepsilon_m}\\
  &=\sum_{\substack{j=1\\\beta_j<\beta_{j-1}}}^{m-1}|M_j|\beta_j\zeta^\alpha_{\beta+\varepsilon_j-\varepsilon_m},
  \end{aligned}
  \end{equation}
  where \(M_j=\{1\le k\le m-1\mid\beta_k=\beta_j\}\).

  Fix \(1\le j\le m-1\) with \(\beta_j<\beta_{j-1}\).  For \(1\le k\le m-1\), we have \(\beta_k=(\beta-\varepsilon_m)_k\) and \(\beta_j=(\beta-\varepsilon_m)_j\), while
  \[
  (\beta-\varepsilon_m)_m=\beta_m-1<\beta_j=(\beta-\varepsilon_m)_j.
  \]
  Therefore,
  \[
  M_j=\{1\le k\le m\mid(\beta-\varepsilon_m)_k=(\beta-\varepsilon_m)_j\},
  \]
  which gives \(|M_j|=\mu_j\).

  We now get the formula in the statement by substituting this into Equation (\ref*{eqn:RecursionSecond}), substituting that result into Equation (\ref*{eqn:RecursionFirst}), and finally solving for \(\zeta^\alpha_\beta\).
\end{proof}

\begin{remark}
We can sacrifice readability a bit for the sake of compactness and express both cases in the theorem using a single formula by using the Kronecker delta:  Putting \(\kappa=1-\delta_{\beta_m1}\), we have
\[
\zeta^\alpha_\beta=(\beta_m-\kappa)^{-1}\Bigg[\sum_{\substack{i=1\\\alpha_i>\alpha_{i+1}\\\,}}^{l(\alpha)}(\alpha_i-i)^\kappa\zeta^{\alpha-\varepsilon_i}_{\beta-\varepsilon_m}-\kappa\sum_{\substack{j=1\\\beta_j<\beta_{j-1}}}^{m-1}\mu_j\beta_j\zeta^\alpha_{\beta+\varepsilon_j-\varepsilon_m}\Bigg].
\]
\qed
\end{remark}

As an application we give a proof of the well-known formula for the value of an irreducible character of \(S_n\) at an \(n\)-cycle.  In the proof, we use the term ``hook,'' which refers to a partition of \(n\) of the form
\[
(n-r,1^r):=(n-r,1,1,\dots,1,0,0,\dots\,)
\]
with \(r\) ones.

\begin{corollary}[{\cite[2.3.17, p.~54]{MR644144}}]
\label{cor:IrreducibleCharacterAtCycle}
  Let \(n\in\Z^+\), let \(\alpha\vdash n\), and put \(\beta=(n,0,0,\dots)\). We have
  \[
  \zeta^\alpha_\beta=
  \begin{cases}
    (-1)^r,&\text{if \(\alpha=(n-r,1^r)\), some \(0\le r<n\)},\\
    0,&\text{otherwise}.
  \end{cases}
  \]
\end{corollary}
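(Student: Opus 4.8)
The plan is to prove the corollary by induction on $n$, driving the recursion of Theorem~\ref{thm:RecursionFormula}(ii). For $\beta=(n,0,0,\dots)$ we have $m=l(\beta)=1$ and $\beta_m=n$, so the base case $n=1$ falls under part (i) and gives $\zeta^{(1)}_{(1)}=\zeta^0_0=1=(-1)^0$, matching the hook $(1)=(1-0,1^0)$. For $n\ge2$ we have $\beta_m=n\ne1$, and since the second sum in part (ii) is empty (it runs up to $m-1=0$) while $\beta-\varepsilon_m=(n-1,0,0,\dots)$, the formula collapses to
\[
\zeta^\alpha_{(n)}=(n-1)^{-1}\sum_{\substack{i=1\\\alpha_i>\alpha_{i+1}}}^{l(\alpha)}(\alpha_i-i)\,\zeta^{\alpha-\varepsilon_i}_{(n-1)}.
\]
Each $\zeta^{\alpha-\varepsilon_i}_{(n-1)}$ is an $(n-1)$-cycle value, so the inductive hypothesis says it equals $(-1)^s$ when $\alpha-\varepsilon_i=(n-1-s,1^s)$ is a hook and $0$ otherwise. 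Thus everything reduces to recording which removals $\alpha-\varepsilon_i$ are hooks and with how many ones.

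For the \emph{hook case} $\alpha=(n-r,1^r)$ (with $0\le r<n$), I would observe that the indices $i$ with $\alpha_i>\alpha_{i+1}$ that leave a hook after removal are $i=1$ (present when $n-r>1$) and $i=r+1$ (present when $r\ge1$). For $i=1$ the coefficient is $\alpha_1-1=n-r-1$ and $\alpha-\varepsilon_1=(n-r-1,1^r)$ contributes $(-1)^r$; for $i=r+1$ the coefficient is $\alpha_{r+1}-(r+1)=-r$ and $\alpha-\varepsilon_{r+1}=(n-r,1^{r-1})$ contributes $(-1)^{r-1}$. In each degenerate boundary case ($r=0$ or $r=n-1$) the absent term has value zero anyway, so the bracket always equals $(n-r-1)(-1)^r+(-r)(-1)^{r-1}=(n-1)(-1)^r$, and dividing by $n-1$ yields $\zeta^\alpha_{(n)}=(-1)^r$, as required.

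For the \emph{non-hook case} the key observation is that $\alpha$ fails to be a hook exactly when $\alpha_2\ge2$. If $i\ne2$ then $(\alpha-\varepsilon_i)_2=\alpha_2\ge2$, so $\alpha-\varepsilon_i$ is not a hook and $\zeta^{\alpha-\varepsilon_i}_{(n-1)}=0$ by induction. If $i=2$ and $\alpha-\varepsilon_2$ happens to be a hook, then $(\alpha-\varepsilon_2)_2=\alpha_2-1\le1$ forces $\alpha_2=2$, whence the coefficient $\alpha_2-2$ vanishes; and if $\alpha-\varepsilon_2$ is not a hook the corresponding value is again $0$. Hence every term of the sum is zero and $\zeta^\alpha_{(n)}=0$.

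The main obstacle I anticipate is the bookkeeping in the hook case---correctly determining which of the two candidate indices actually occur in the sum and checking that the omitted term contributes zero in the boundary cases $r=0$ and $r=n-1$---together with the clean but crucial coincidence in the non-hook case that the only potentially surviving term ($i=2$) carries the coefficient $\alpha_2-2$, which is forced to vanish precisely when the removal produces a hook. Once these two structural facts are in hand, the arithmetic collapses immediately and the induction closes.
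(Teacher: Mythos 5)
Your proposal is correct and takes essentially the same route as the paper: induction on \(n\) driven by Theorem \ref{thm:RecursionFormula}(ii), with the non-hook case killed by the observation that the only removal \(\alpha-\varepsilon_i\) that could be a hook is \(i=2\) with forced coefficient \(\alpha_2-2=0\), and the hook case settled by the two-term computation giving \((n-r-1)(-1)^r+(-r)(-1)^{r-1}=(n-1)(-1)^r\). The only difference is cosmetic: the paper short-circuits \(\alpha=(n)\) and \(\alpha=(1^n)\) via the trivial and alternating characters, whereas you run these boundary cases through the recursion itself --- exactly the alternative the paper's closing remark notes is possible.
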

\begin{proof}
  We proceed by induction on \(n\).  If \(l(\alpha)=1\), then \(\alpha=(n)\) and \(\zeta^\alpha\) is the trivial character, so the claim holds with \(r=0\).  Now assume that \(l(\alpha)>1\).  In particular, \(n>1\).

  We have \(m=l(\beta)=1\) and \(\beta_m=n\not=1\), so part (ii) of Theorem \ref{thm:RecursionFormula} applies.  In that formula the sum over \(j\) is empty, while, for each index \(i\) in the first sum, the induction hypothesis applies, since \(\alpha-\varepsilon_i\vdash(n-1)\) and \(\beta-\varepsilon_m=(n-1,0,0,\dots)\), giving  \(\zeta^{\alpha-\varepsilon_i}_{\beta-\varepsilon_m}=0\) if \(\alpha-\varepsilon_i\) is not a hook.

  Assume that \(\alpha\) is not a hook and assume that \(\alpha-\varepsilon_i\) is a hook for some \(1\le i\le l(\alpha)\) with \(\alpha_i>\alpha_{i+1}\).  Then \(i=2\) and \(\alpha_i=2\), so that \(\alpha_i-i=0\).  Therefore, the theorem gives \(\zeta^\alpha_\beta=0\), which establishes the second case.

  Now assume that \(\alpha=(n-r,1^r)\) for some \(0\le r<n\).  Since \(l(\alpha)>1\), we have \(r>0\).  If \(r=n-1\), then \(\alpha=(1^n)\) and \(\zeta^\alpha\) is the alternating character, so the claim holds.  Now assume that \(r<n-1\), so that \(n-r>1\).  Then the formula in the theorem has two terms, corresponding to \(i=1\) and \(i=r+1\), respectively, and we get
  \begin{align*}
  \zeta^\alpha_\beta&=(\beta_1-1)^{-1}\left[(\alpha_1-1)\zeta^{\alpha-\varepsilon_1}_{\beta-\varepsilon_1}+(\alpha_{r+1}-(r+1))\zeta^{\alpha-\varepsilon_{r+1}}_{\beta-\varepsilon_1}\right]\\
  &=(n-1)^{-1}\left[(n-r-1)(-1)^r+(1-(r+1))(-1)^{(r-1)}\right]\\
  &=(-1)^r,
  \end{align*}
  again using the induction hypothesis.  This establishes the first case and completes the proof.
\end{proof}

We chose to streamline this proof by using that the cases \(\alpha=(n)\) and \(\alpha=(1^n)\) correspond to the trivial character and the alternating character, respectively, but this was not necessary since Theorem \ref{thm:RecursionFormula} handles these cases as well.

\bibliographystyle{amsalpha}
\bibliography{HolmesBib}

\end{document}